\newtheorem{theorem}{Theorem}
\newtheorem{lemma}{Lemma}
\newtheorem{rem}{Remark}
\numberwithin{equation}{section}
\newcommand{\abs}[1]{\left\vert#1\right\vert}
\newcommand{\R}{\mbox{$\mathbb{R}$}}
\newcommand{\C}{\mbox{$\mathbb{C}$}}
\newcommand{\D}{\mbox{$\mathbb{D}$}}
\date{\today}
\begin{document}
\setcounter{page}{1}

	\title[On Koebe-type functions for harmonic quasiconformal mappings]
	{On Koebe-type functions for harmonic quasiconformal mappings}
	
	\author[Z.-G. Wang, J.-L. Qiu and A. Rasila]{Zhi-Gang Wang, Jia-Le Qiu and Antti Rasila}

		\address{\noindent Zhi-Gang Wang\vskip.01in
		School of Mathematics and Statistics, Hunan First Normal University, Changsha 410205, Hunan, P. R. China.}
	\email{\textcolor[rgb]{0.00,0.00,0.84}{wangmath$@$163.com}}

	\address{\noindent Jia-Le Qiu \vskip.01in
		School of Mathematics and Statistics, Changsha University of Science and Technology,
		Changsha 410114, Hunan, P. R. China.}
	\vskip.01in
        \address{\noindent Hunan Provincial Key Laboratory of Mathematical Modeling and Analysis in
Engineering, Changsha University of Science and Technology,
		Changsha 410114, Hunan, P. R. China.}
	\email{\textcolor[rgb]{0.00,0.00,0.84}{qiujiale2023$@$163.com}}

\address{\noindent Antti Rasila \vskip.01in
	Department of Mathematics with Computer Science, Guangdong Technion-Israel Institute
of Technology, 241 Daxue Road, Shantou 515063, Guangdong, P. R.
	China.}

\address{\noindent
Department of Mathematics, Technion-Israel Institute of Technology, Haifa 3200003, Israel.}
\email{\textcolor[rgb]{0.00,0.00,0.84}{antti.rasila$@$iki.fi}; \textcolor[rgb]{0.00,0.00,0.84}{antti.rasila$@$gtiit.edu.cn}}

\thanks{2020 \textit {Mathematics Subject Classification.} Primary 31A05; Secondary 30C55, 30C62.}

\thanks{\textit {Keywords and Phrases.} {Generalized Koebe function; harmonic quasiconformal mapping; pre-Schwarzian and Schwarzian norms.}}
	

	
	
	\date{\today}

	\begin{abstract}
This paper studies a class of Koebe-type harmonic quasiconformal functions. It is motivated by the shear construction of Clunie and Sheil-Small [Ann. Acad. Sci. Fenn. Ser. A I Math. 9: 3--25, 1984] and the harmonic quasiconformal Koebe function. Equivalent univalence conditions, pre-Schwarzian and Schwarzian norms, coefficient inequalities, as well as growth and area theorems for this family of functions are established. These findings improve several previously known results.
	\end{abstract} \maketitle

	
	\section{Introduction and preliminaries}
	Let $\mathcal S$ be the family of all univalent analytic functions $\varphi$ in the open unit disk $\D$ with the normalizations $\varphi(0)=\varphi'(0)-1=0$.
The extremal functions for the class $\mathcal S$ are the Koebe function
\begin{equation*}\label{3}
    k(z):=\dfrac{z}{(1-z)^2}\quad(z\in\D)
\end{equation*} and its rotations.
The Koebe function $k(z)$ is the extremal function of the Bieberbach conjecture (now known as the de Branges theorem).
Geometrically, it maps $\D$ onto the complex plane minus the segment of the negative real axis from $-1/4$ to infinity.

The generalized Koebe function $k_a(z)$ is defined by
\begin{equation}\label{4}
    k_a(z):=\dfrac{1}{2a}\left [\left(\dfrac{1+z}{1-z}\right)^a-1\right]\quad\left(z\in\D;\,a\in \C\backslash\{0\}\right),
\end{equation}
which coincides with the classical Koebe function $k(z)$ for $a=2$. 
The function $k_a(z)$ serves as an extremal function for several interesting problems, see e.g., \cite{ga,kt,m,nst,p}. 
By noting that if we take $a\rightarrow0$ in \eqref{4}, the function $k_a(z)$ reduces to 
\begin{equation*}\label{5}
    k_0(z):=\dfrac{1}{2}\log\dfrac{1+z}{1-z}\quad(z\in\D).
\end{equation*}

Over the years, various generalizations of Koebe functions have been introduced in geometric function theory (cf. \cite{dy,hm1,hrz,ma,pr}).	

Let $\mathcal H$ denote the class of complex-valued harmonic functions $f=h+\overline{g}$ in $\D$,
normalized by the conditions $f(0)=f_{z}(0)-1=0$, which have the
form
\begin{equation}\label{111}
f(z)=z+\sum_{n=2}^{\infty}a_nz^n+\overline{\sum_{n=1}^{\infty}b_nz^n}.
\end{equation}
 The Jacobian of $f$ is given by
\begin{equation*}\label{13}
    J_f(z)=|h'(z)|^2-|g'(z)|^2.
\end{equation*}
In \cite{l}, Lewy showed that a harmonic function $f=h+\overline{g}\in\mathcal H$ is locally univalent and sense-preserving if and only if $J_f(z)>0$. This statement is equivalent to $|\omega(z)|=\abs{g'(z)/h'(z)}<1$ with $h'(z)\neq 0$. The quantity $\omega(z)$ is called the complex dilatation of $f$.

In 2015, Hern\'andez and Mart\'in \cite{hm2} introduced the pre-Schwarzian derivative $P_f$ and Schwarzian derivative $S_f$ for a locally univalent harmonic mapping:
\begin{equation*}\label{9}
       P_f(z):=\dfrac{h''(z)}{h'(z)}-\dfrac{\omega'(z)\overline {\omega(z)}}{1-|\omega(z)|^2}
\end{equation*}
and
\begin{equation*}\label{10}
 S_f(z):=\dfrac{h'''(z)}{h'(z)}-\dfrac{3}{2}\left(\dfrac{h''(z)}{h'(z)}\right)^2
 +\dfrac{\overline {\omega(z)}}{1-|\omega|^2}\left(\dfrac{h''(z)}{h'(z)}\omega'(z)-\omega''(z)\right)-\dfrac{3}{2}\left(\dfrac{\omega'(z)\overline {\omega(z)}}{1-|\omega|^2}\right)^2.
 \end{equation*}   
We observe that $P_f$ and $S_f$ are generalizations of the pre-Schwarzian and Schwarzian derivatives of analytic functions.
 The corresponding pre-Schwarzian and Schwarzian norms of a locally univalent harmonic function $f$ are defined as follows:
 \begin{equation*}\label{11}
     \|P_f\|:=\sup_{z\in{\scriptsize \D}}|P_f(z)|\left(1-|z|^2\right)
 \end{equation*} and
 \begin{equation*}\label{12}
     \|S_f\|:=\sup_{z\in{\scriptsize \D}}|S_f(z)|\left(1-|z|^2\right)^2.
 \end{equation*}
For recent developments on pre-Schwarzian and Schwarzian norms of harmonic mappings, see \cite{
cp,chm,g2,
wwrq}.

\subsection{Shearing method}
In 1984, Clunie and Sheil-Small \cite{cs} (see also \cite{du}) proved the following classical result by using \textit{shearing method}.
\begin{lemma}\label{lem1}
Let $f=h+\overline{g}$ be a locally univalent harmonic mapping in $\D$. Then it is univalent and convex in the  direction $\theta$ if and only if the analytic function $h-e^{2i\theta}g$ is univalent and convex in the direction $\theta$. 
\end{lemma}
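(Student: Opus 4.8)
The plan is to normalize by a rotation, reduce everything to the single shared harmonic function $\operatorname{Im} f$, and then read off both implications from a monotonicity computation along its level curves. First I would reduce to $\theta=0$: replacing $f$ by $e^{-i\theta}f$ sends $h\mapsto e^{-i\theta}h$ and $g\mapsto e^{i\theta}g$, so the analytic shear $h-e^{2i\theta}g$ becomes $e^{-i\theta}(h-g)$ while ``convex in the direction $\theta$'' becomes ``convex in the horizontal direction''; thus it suffices to treat horizontal convexity and the function $\varphi:=h-g$. The key identity is $\operatorname{Im} f=\operatorname{Im}(h+\overline g)=\operatorname{Im} h-\operatorname{Im} g=\operatorname{Im}\varphi=:v$, so the harmonic map $f$ and the analytic function $\varphi$ have one and the same imaginary part. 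Consequently they share the level sets $\gamma_c:=\{z\in\D:v(z)=c\}$, and each maps $\gamma_c$ into the single horizontal line $\{\operatorname{Im} w=c\}$.

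Next I would extract the monotonicity. Since $f$ is locally univalent and sense\nbd preserving, Lewy's theorem gives $|\omega|=|g'/h'|<1$, so $\varphi'=h'(1-\omega)$ never vanishes; hence $\varphi$ is locally univalent and $v$ is harmonic with $\nabla v\neq0$, so its level sets are disjoint unions of smooth arcs. Parametrizing a component of $\gamma_c$ with unit tangent $\dot z=e^{i\alpha}$, differentiation of $v$ along the arc gives $\operatorname{Im}(\varphi'e^{i\alpha})=0$, so $s:=\varphi'e^{i\alpha}$ is real and nonzero and $\tfrac{d}{dt}\operatorname{Re}\varphi=s$. Substituting $e^{i\alpha}=s/\varphi'$ into $\tfrac{d}{dt}\operatorname{Re} f=\operatorname{Re}\big((h'+g')e^{i\alpha}\big)$ and using $h'+g'=h'(1+\omega)$ yields
\[
\frac{d}{dt}\operatorname{Re} f=s\,\operatorname{Re}\frac{1+\omega}{1-\omega}.
\]
Since $|\omega|<1$ forces $\operatorname{Re}\frac{1+\omega}{1-\omega}>0$, both $\operatorname{Re} f$ and $\operatorname{Re}\varphi$ are strictly monotone along every arc of $\gamma_c$, in the same orientation.

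Finally I would invoke a topological characterization to pass from the infinitesimal to the global statement: a locally univalent map $G$ whose real part is strictly monotone along the level arcs of $\operatorname{Im} G$ is univalent with image convex in the horizontal direction if and only if every level set of $\operatorname{Im} G$ is connected. Indeed, connectedness together with strict monotonicity of $\operatorname{Re} G$ makes $G$ injective on each level arc, while distinct levels have distinct imaginary parts, so $G$ is globally injective and each horizontal slice of $G(\D)$ is the connected image of one arc; conversely a disconnected level set forces either non\nbd injectivity or a disconnected slice. Applying this to both $f$ and $\varphi$ — legitimate precisely because of the strict monotonicity established above for each of them — reduces the property ``univalent and convex in the horizontal direction'' to connectedness of the level sets of the common function $v$, which is manifestly the same requirement for $f$ and for $\varphi$. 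This gives the stated equivalence.

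The main obstacle is exactly this last, global step: the monotonicity above is automatic and purely local, but converting it into genuine univalence and convexity of the image requires controlling the global structure of the level curves of the harmonic function $v$ — that each component is a proper arc terminating on $\partial\D$, and that connectedness of the level set is what distinguishes the univalent convex case from the merely locally univalent one. The positivity $\operatorname{Re}\frac{1+\omega}{1-\omega}>0$, i.e.\ the sense\nbd preserving hypothesis $|\omega|<1$, is what guarantees that $\operatorname{Re} f$, and not merely $\operatorname{Re}\varphi$, is strictly monotone along the level arcs, so that the characterization may be applied to $f$ as well as to $\varphi$.
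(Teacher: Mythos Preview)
The paper does not give its own proof of this lemma: it is quoted from Clunie and Sheil--Small \cite{cs} (see also Duren \cite{du}) and used as a black box. So there is nothing in the paper to compare against directly.

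That said, your outline is precisely the classical Clunie--Sheil--Small argument as it appears in those references: the rotation reduces to $\theta=0$; the identity $\operatorname{Im} f=\operatorname{Im}(h-g)$ means $f$ and $\varphi=h-g$ share level curves; the computation $\tfrac{d}{dt}\operatorname{Re} f=s\,\operatorname{Re}\frac{1+\omega}{1-\omega}$ with $s=\tfrac{d}{dt}\operatorname{Re}\varphi$ shows the two real parts are strictly monotone in the same sense along each level arc; and finally both properties ``univalent and convex in the horizontal direction'' reduce to the single topological condition that the level sets of the common harmonic function $v$ be connected. Your identification of the last step as the only genuinely global and delicate point is accurate; in the cited sources this is handled by observing that $\nabla v\neq 0$ forces each level component to be a simple analytic arc tending to $\partial\D$ at both ends, after which the equivalence between connectedness of the level sets and the univalence-plus-horizontal-convexity of either map follows from the strict monotonicity you established. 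So your proposal is correct and is, in substance, the original proof.
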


For further details regarding the shear construction, we refer the reader to \cite{fhm,kpv,mpq,wang}.
Moreover, for a given analytic function $\phi$ convex in the direction $\theta$ and a prescribed dilatation $\omega$, the above shear construction provides a method to construct univalent harmonic mappings. 
A classical example of univalent harmonic mappings concerning this method is the harmonic Koebe function 
\begin{equation}
    {K}(z)=H+\overline{G}=\dfrac{z-({1}/{2})z^2+({1}/{6})z^3}{(1-z)^3}+\overline {\dfrac{({1}/{2})z^2+({1}/{6})z^3}{(1-z)^3}},
\end{equation}
where $H$ and $G$ with $H(0)=G(0)=0$ are solutions to the system of equations 
\begin{equation*}\label{15}
    \begin{cases}
        H(z)-G(z)=k(z),\\
      \,  {G'(z)}/{H'(z)}=z.
    \end{cases}
\end{equation*}


In 2016, Ferrada-Salas and Mart\'in \cite{fm} used the shearing method  to construct a family $$\mathcal {K_H}(\nu,a,\mu,R):=\{f=h+\overline{g}:\, a\in\C;\,|\nu|=|\mu|=1;\,0\leq R\leq1\}$$ of generalized harmonic Koebe functions, which are sheared by
the system
\begin{equation*}\label{8}
    \begin{cases}
        h(z)-\nu g(z)=k_a(z),\\
        \ \ \, g'(z)/h'(z)=\mu\, l_R(z),
    \end{cases}
\end{equation*}
where the lens-map $l_R(z)$ is given by
\begin{equation*}\label{7}
    l_R(z)=\dfrac{\left[{(1+z)}/{(1-z)}\right]^R-1}{\left[{(1+z)}/{(1-z)}\right]^R+1}\quad(z\in\D;\,0\leq R\leq1).
\end{equation*}
They obtained 
several interesting properties for the family $\mathcal {K_H}(\nu,a,\mu,R)$.

\subsection{Harmonic quasiconformal Koebe function}

We say that $f$ belongs to the class $\mathcal{S}_\mathcal{H}(K)$ of \textit{harmonic $K$-quasiconformal mappings}, where $K\ge 1$ is a constant, if $f=h+\overline{g}$ is an univalent harmonic mapping and its dilatation satisfies the condition 
$\abs{{g'}/{h'}}\leq \lambda$, where $\lambda\in [0,1)$ is given by
$$
\lambda:=\frac{K-1}{K+1}\quad(K\geq 1).
$$ A function $f$ is called a harmonic quasiconformal mapping, if it belongs to $\mathcal{S}_\mathcal{H}(K)$ for some $K\ge 1$ (cf. \cite{lz,wang1}).

Wang \textit{et al.} \cite{wwrq} in 2024 constructed the so-called harmonic $K$-quasiconformal Koebe function \begin{align*}\begin{split}\label{31}
  f_\lambda(z)&=\frac{1}{(\lambda-1)^3}{\left[\frac{(\lambda-1)(1-3\lambda+2\lambda z)z }{(1-z)^2}+\lambda(\lambda+1) \log\left(\frac{1-z}{1-\lambda z}\right)\right]}\\ & \qquad\ \ \ +\frac{\lambda}{(\lambda-1)^3}\overline{{\left[\frac{(1-\lambda)(1+\lambda-2z)z}{(1-z)^2}+(\lambda+1) \log\left(\frac{1-z}{1-\lambda z}\right)\right]} },\end{split}
\end{align*}
 which is sheared by the system
\begin{equation*}
    \begin{cases}
       h(z)-g(z)=k(z),\\
\ g'(z)/h'(z)=\lambda z\quad(0\leq \lambda<1).
    \end{cases}
\end{equation*}
It looks tempting to assume that $f_\lambda(z)$ is an extremal function for the family of harmonic $K$-quasiconformal mappings. They subsequently posed a series of conjectures involving the class $\mathcal{S}_\mathcal{H}(K)$ (cf. \cite{wwrq}).
Noting that for $\lambda=0$, we get the classical Koebe function, and for $\lambda\rightarrow1^-$, the function coincides exactly with the harmonic Koebe function.

It is worth mentioning that Li and Ponnusamy \cite{lp} later verified that the above function is indeed extremal for several subclasses of harmonic quasiconformal mappings. Building on their work, Das, Huang, and Rasila \cite{dhr} have further extended the corresponding results obtained by Li and Ponnusamy in \cite{lp}.

\subsection{Koebe-type harmonic quasiconformal functions}
Motivated by the class $\mathcal {K_H}(\nu,a,\mu,R)$ and the harmonic quasiconformal Koebe function $f_\lambda(z)$, for the system of differential equations given by
\begin{equation}\label{1}
  \begin{cases}
      h(z)-g(z)=k_a(z)\quad(a\in\R),\\
     \,{g'(z)}/{h'(z)}=\lambda z\quad(0\leq\lambda<1),
  \end{cases}  
\end{equation}
we construct a class of Koebe-type harmonic quasiconformal functions 
${f}_{a,\lambda}=h+\overline{g}$,
where 
$$h(z):=-\frac{-2 \lambda  \, _2F_1\left(1,-a;1-a;\frac{\lambda +1}{\lambda -1}\right)+\lambda -1}{2 a \left(\lambda ^2-1\right)}-\frac{\psi(a,\lambda,z)}{4a(a-1)(\lambda-1) (\lambda +1)}$$
with
$$\psi(a,\lambda,z):=\left(\frac{1}{1-z}\right)^a \bigg\{4 (a-1) \lambda  (z+1)^a \, _2F_1\left(1,-a;1-a;-\frac{(z-1) (\lambda +1)}{(z+1) (\lambda -1)}\right)$$$$+2^a (\lambda -1) \bigg[a (z-1) \, _2F_1\left(1-a,1-a;2-a;\frac{1-z}{2}\right)$$$$-2 (a-1) \, _2F_1\left(-a,-a;1-a;\frac{1-z}{2}\right)\bigg]\bigg\},$$
and $$g(z):=\frac{\varpi(a,\lambda,z)}{2 a \left(\lambda ^2-1\right)}$$
with \begin{align*}
\begin{split}
\varpi(a,\lambda,z&):=\lambda  \bigg\{2 \, _2F_1\left(1,-a;1-a;\frac{\lambda +1}{\lambda -1}\right)\\&-\left(\frac{1+z}{1-z}\right)^a \left[2\,_2F_1\left(1,-a;1-a;-\frac{(z-1) (\lambda +1)}{(z+1) (\lambda -1)}\right)+\lambda -1\right]+\lambda -1\bigg\}.
\end{split}
\end{align*}
As in the classical case, the function ${f}_{2,\lambda}(z)$ coincides with the harmonic quasiconformal Koebe function ${f}_{\lambda}(z)$.
We denote the class of Koebe-type harmonic quasiconformal functions by $$\mathcal{K}_{a,\lambda}:=\{{f}_{a,\lambda}=h+\overline{g}\in\mathcal{H}: a\in\R;\, \lambda\in[0,1)\}.$$

We present the figures of $f_{0,0}(z)$, $f_{0,1/2}(z)$, $f_{2,0}(z)$, $f_{2,1/2}(z)$, $K(z)$ and $f_{3,1/2}(z)$ to illuminate 
the family $\mathcal{K}_{a,\lambda}$ (see Figure \ref{F1}).

\begin{figure}[H] 
\centering 
\subfigure[Image of $f_{0,0}(z)$.]{ 
\label{Fig.sub.1} 
\includegraphics[height=5.0cm]{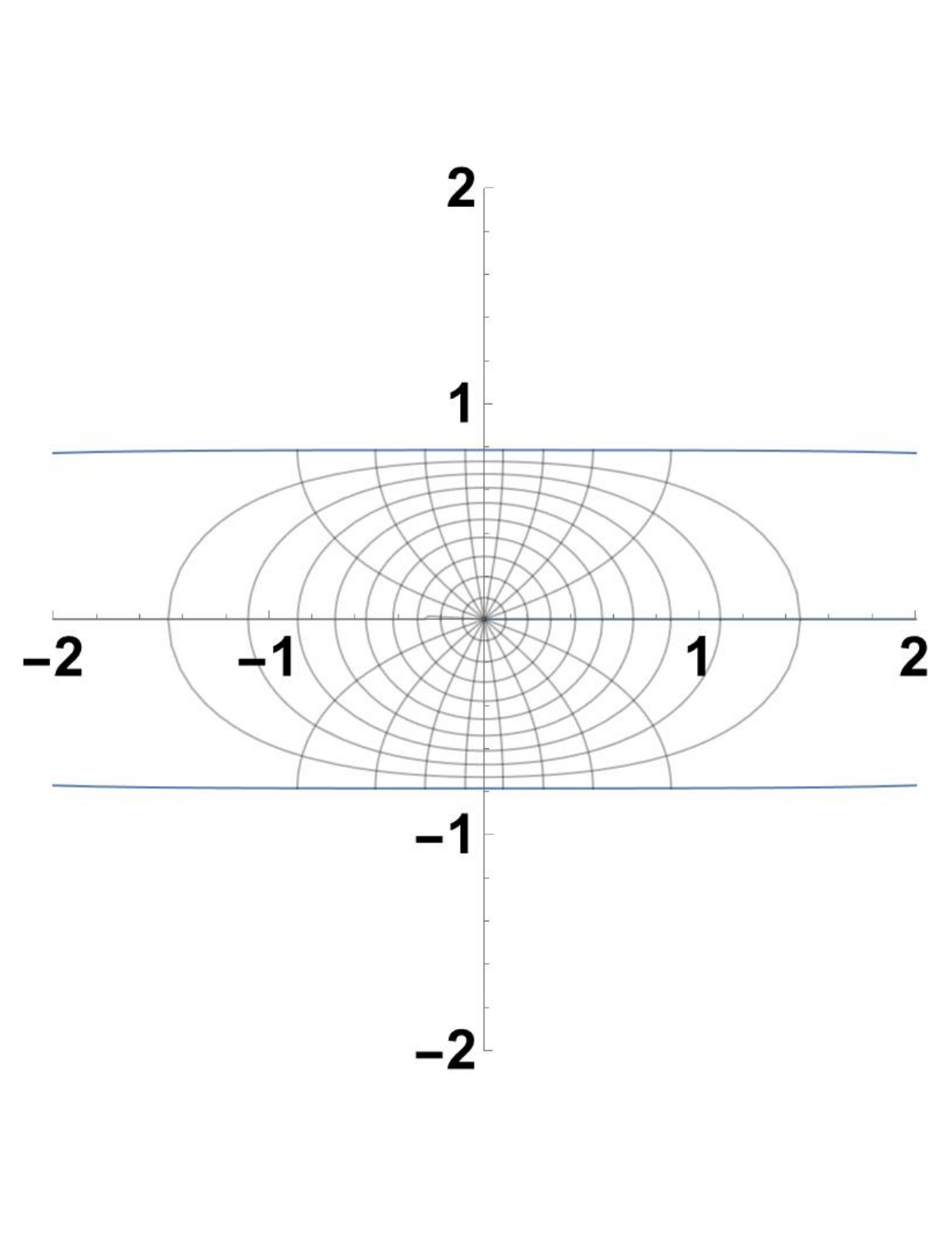}} 
\quad
\subfigure[Image of $f_{0,1/2}(z)$.]{ 
\label{Fig.sub.2} 
\includegraphics[height=5.0cm]{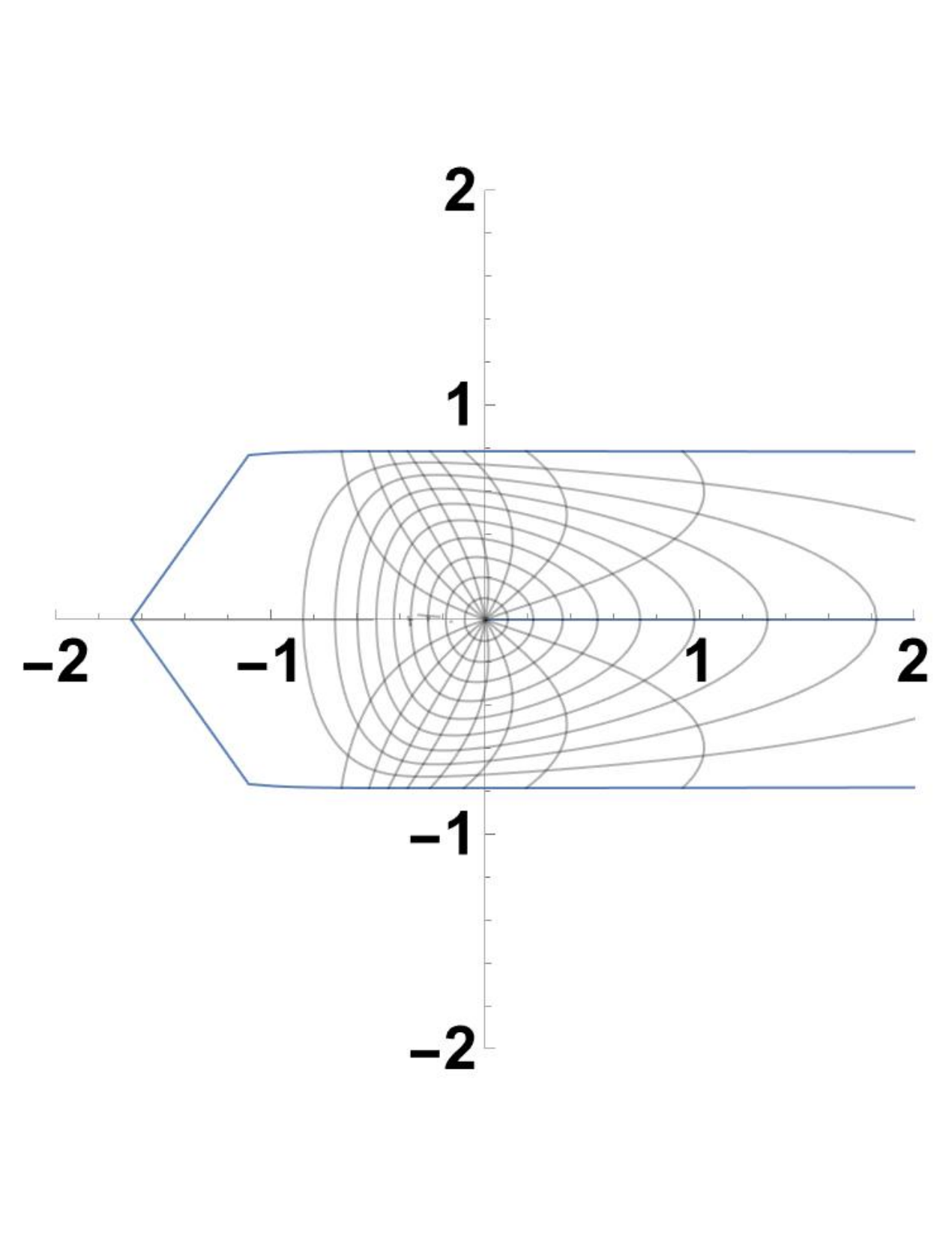}} 
\end{figure}

\begin{figure}[H] 
\centering 
\subfigure[Image of $f_{2,0}(z)$.]{ 
\label{Fig.sub.3} 
\includegraphics[height=5.0cm]{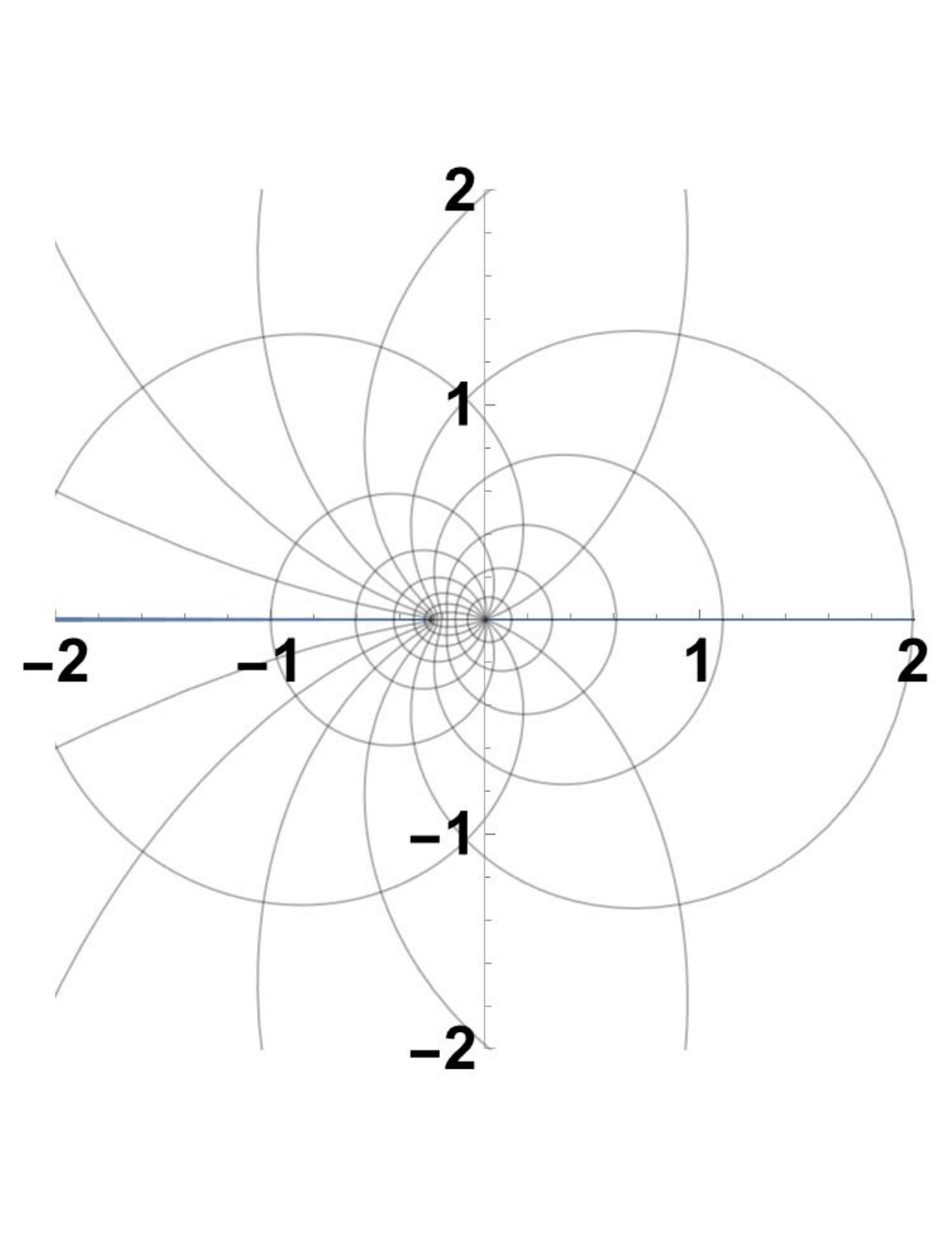}} 
\quad
\subfigure[Image of $f_{2,1/2}(z)$.]{ 
\label{Fig.sub.4} 
\includegraphics[height=5.0cm]{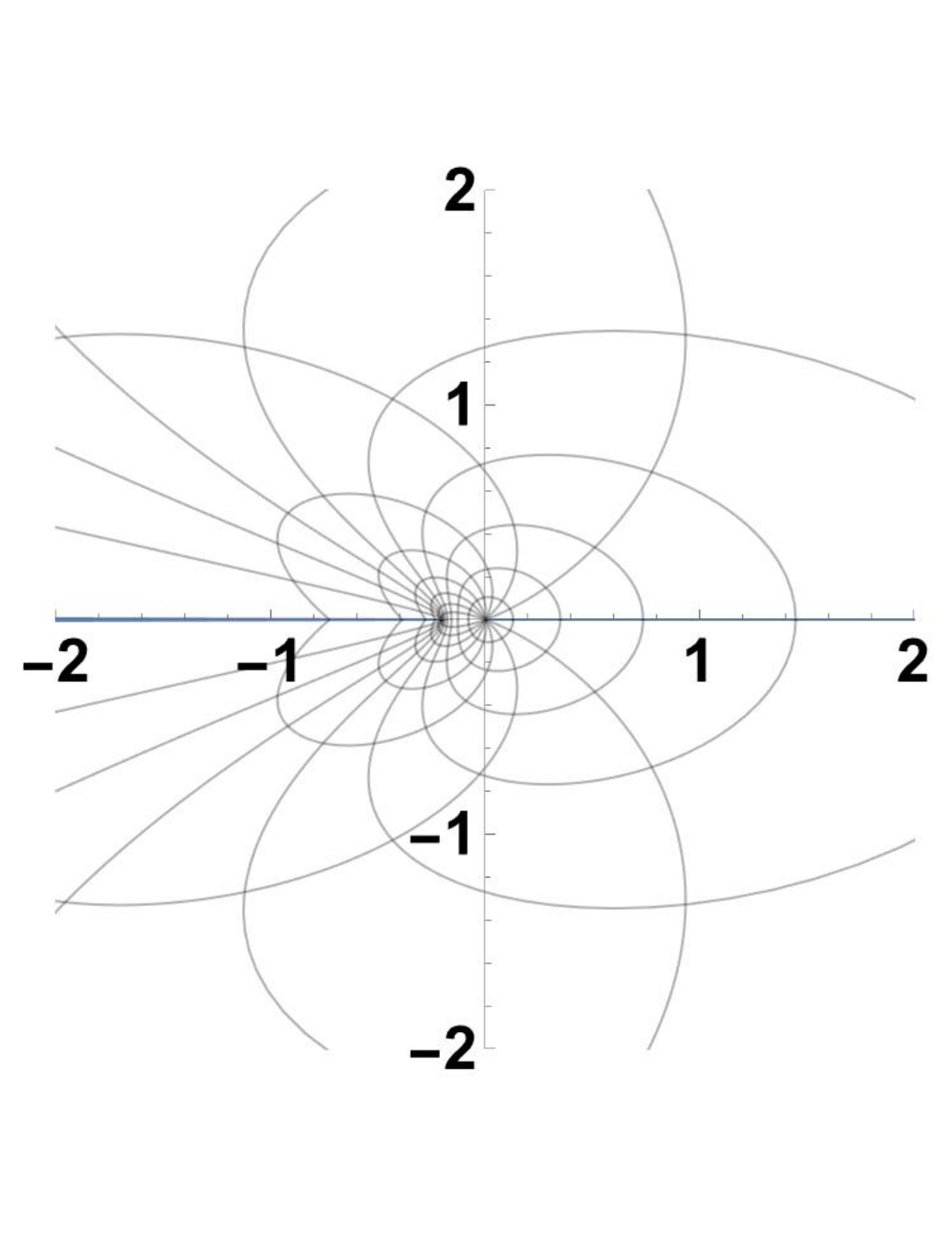}} 
\end{figure}

\begin{figure}[H] 
\centering 
\subfigure[Image of ${K}(z)$.]{ 
\label{Fig.sub.5} 
\includegraphics[height=5.0cm]{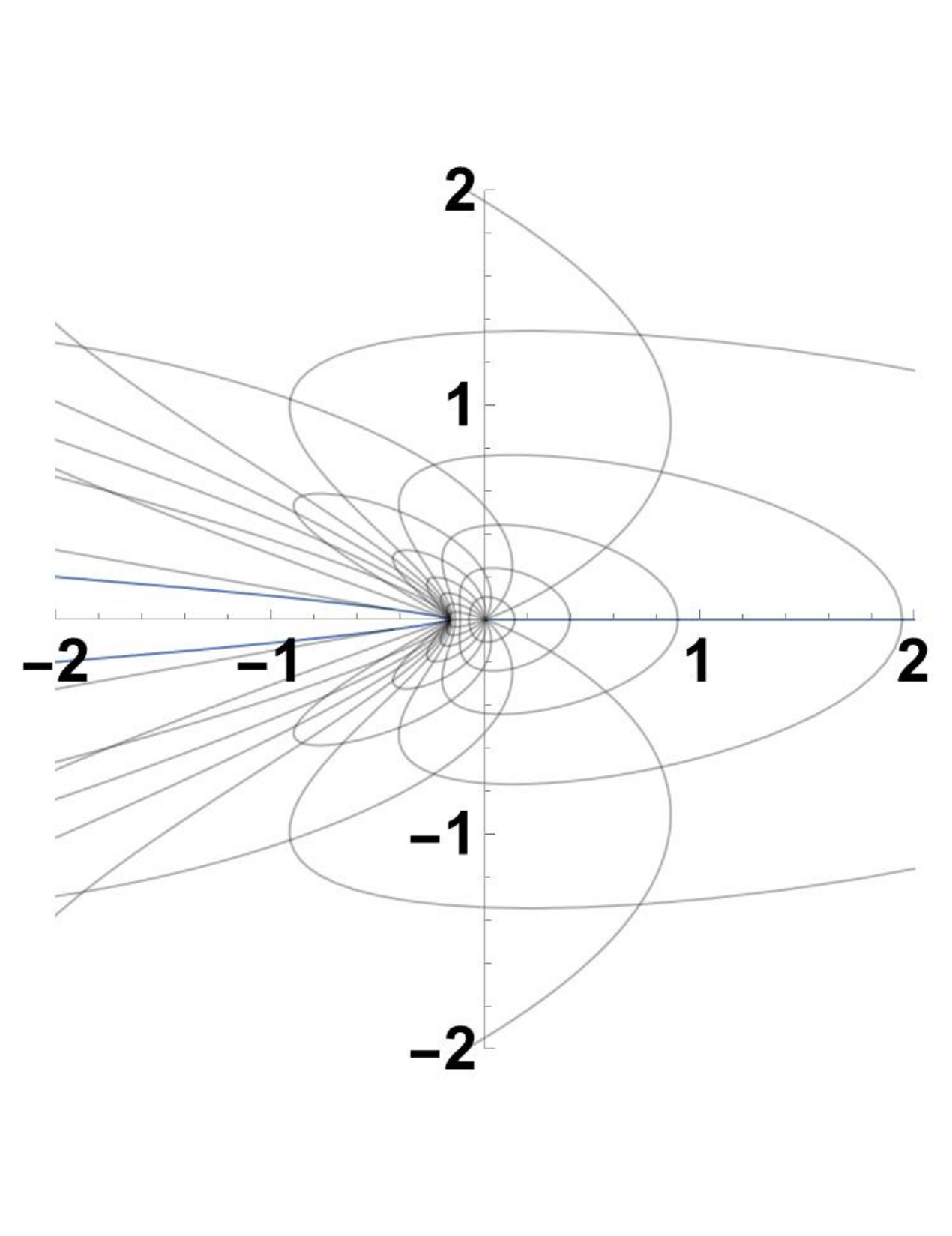}} 
\quad
\subfigure[Image of $f_{3,1/2}(z)$.]{ 
\label{Fig.sub.6} 
\includegraphics[height=5.0cm]{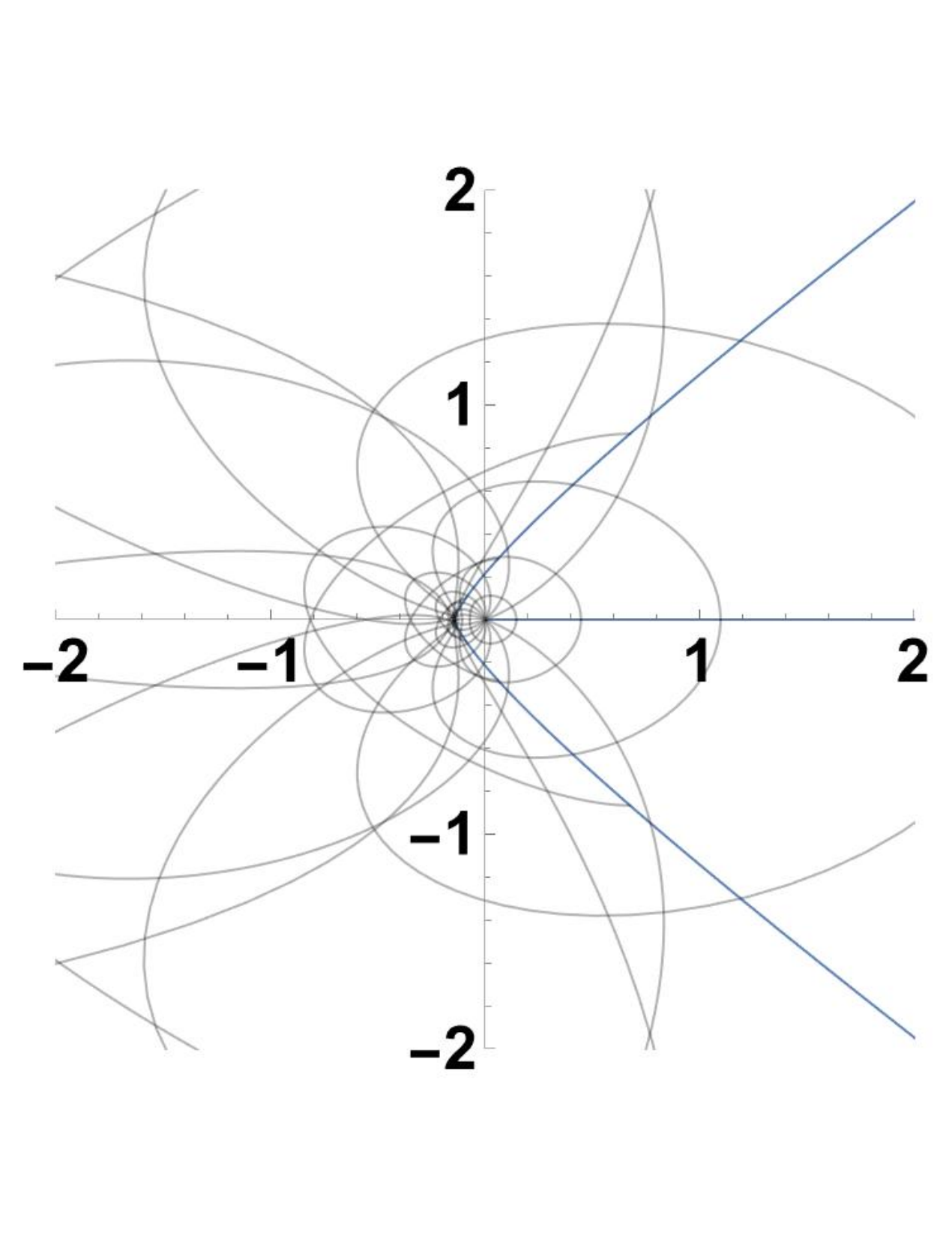}} 
\caption{\small{Images of the unit disk under the mappings $f_{0,0}(z)$, $f_{0,1/2}(z)$, $f_{2,0}(z)$, $f_{2,1/2}(z)$, ${K}(z)$
and $f_{3,1/2}(z)$.}}
\label{F1} 
\end{figure}

The primary objective of this paper is to establish equivalent univalence conditions, pre-Schwarzian and Schwarzian norms, coefficient inequalities, as well as growth and area theorems for the class $\mathcal{K}_{a,\lambda}$
of Koebe-type harmonic quasiconformal functions.


	\vskip.10in
    \section{Properties of the family $\mathcal{K}_{a,\lambda}$}
	\subsection{Equivalent univalence conditions for the family $\mathcal{K}_{a,\lambda}$}
	Firstly, we derive  univalence conditions for the family $\mathcal{K}_{a,\lambda}$.


	\begin{theorem}\label{T1}
	Let $a\in\R$ and $\lambda\in[0,1)$. Then the function $f_{a,\lambda}$ is univalent if and only if  $-2\leq a\leq2$. 
\end{theorem}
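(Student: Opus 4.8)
The plan is to reduce the univalence of $f_{a,\lambda}=h+\overline g$ to properties of the analytic shear function $k_a=h-g$ by means of the Clunie--Sheil-Small lemma (Lemma~\ref{lem1}) with $\theta=0$, and then to analyze $k_a$ directly through the conformal change of variable $w=(1+z)/(1-z)$, which carries $\D$ onto the right half-plane $\{\operatorname{Re}w>0\}$ and turns $k_a$ into the affine image $\frac1{2a}(w^{a}-1)$ of the power map $w\mapsto w^{a}$. Two elementary facts are used throughout. First, since the dilatation is $\omega=g'/h'=\lambda z$ with $|\lambda z|<\lambda<1$ and $h'=k_a'/(1-\lambda z)=(1+z)^{a-1}/[(1-z)^{a+1}(1-\lambda z)]\neq0$ on $\D$, the map $f_{a,\lambda}$ is locally univalent and sense-preserving for every $a\in\R$ and $\lambda\in[0,1)$; hence local univalence is never the issue. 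Second, because $h-g=k_a$ one has the horizontal-shear identity $\operatorname{Im}f_{a,\lambda}=\operatorname{Im}(h)-\operatorname{Im}(g)=\operatorname{Im}k_a$ pointwise, and since $k_a$ (hence $h,g$) has real Taylor coefficients, $f_{a,\lambda}(\overline z)=\overline{f_{a,\lambda}(z)}$.

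For the sufficiency I would first prove that $k_a$ is univalent and convex in the horizontal direction whenever $-2\le a\le2$. Writing $\zeta=w^{a}$, injectivity of $w\mapsto w^{a}$ on the right half-plane is equivalent to the image argument-width $|a|\pi$ being at most $2\pi$, i.e. $|a|\le2$; thus $k_a$ is univalent on $\D$ for $|a|\le2$. For its image, $\zeta=w^{a}$ ranges over the sector $\{|\arg\zeta|<a\pi/2\}$ (a horizontal strip when $a=0$), so $k_a(\D)=\frac1{2a}\left(\{|\arg\zeta|<a\pi/2\}-1\right)$ is a wedge with vertex on the negative real axis, symmetric about $\R$ and opening to the right (resp. the strip $|\operatorname{Im}w|<\pi/4$ when $a=0$). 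A direct check that each horizontal line meets this wedge in a single ray---immediate when the opening satisfies $a\pi\le\pi$, and following from the monotonicity of $\cot$ on $(0,\pi)$ when $\pi<a\pi<2\pi$---shows the image is convex in the horizontal direction; the cases $-2\le a<0$ reduce to $a>0$ via $k_{-a}(z)=-k_a(-z)$. With $k_a$ univalent and convex in the horizontal direction and $f_{a,\lambda}$ locally univalent, Lemma~\ref{lem1} then yields that $f_{a,\lambda}$ is univalent (indeed convex in the horizontal direction).

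For the necessity I would argue the contrapositive and produce an explicit pair of coincident points when $|a|>2$. On $\D$ one has $\operatorname{Im}k_a=\frac1{2a}|w|^{a}\sin\!\left(a\arg w\right)$ with $\arg w\in(-\pi/2,\pi/2)$. When $|a|>2$ the value $\pi/|a|$ lies in $(0,\pi/2)$, so I may choose a non-real $z_0\in\D$ with $\arg\frac{1+z_0}{1-z_0}=\pi/|a|$; for this point $a\arg w=\pm\pi$, whence $\operatorname{Im}f_{a,\lambda}(z_0)=\operatorname{Im}k_a(z_0)=0$, i.e. $f_{a,\lambda}(z_0)\in\R$. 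The conjugate symmetry then gives $f_{a,\lambda}(\overline{z_0})=\overline{f_{a,\lambda}(z_0)}=f_{a,\lambda}(z_0)$ while $z_0\neq\overline{z_0}$ because $z_0\notin\R$, so $f_{a,\lambda}$ is not univalent. Since no such non-real zero of $\operatorname{Im}k_a$ exists for $|a|\le2$, this pins the admissible range down precisely to $-2\le a\le2$.

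The step I expect to require the most care is the necessity direction: a priori the Clunie--Sheil-Small equivalence only controls the conjunction ``univalent and convex in the horizontal direction'', so failure of that conjunction for $|a|>2$ does not by itself exclude univalence of $f_{a,\lambda}$. The symmetry argument above circumvents this by exhibiting an honest interior collision $f_{a,\lambda}(z_0)=f_{a,\lambda}(\overline{z_0})$ rather than appealing to the lemma. The remaining delicate point is the convexity-in-direction check for the reflex wedge $\pi<a\pi<2\pi$, which must be carried out by hand since the wedge is then non-convex as a set, even though its intersection with every horizontal line remains a single ray.
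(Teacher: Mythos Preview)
Your proposal is correct and follows essentially the same route as the paper: sufficiency via Lemma~\ref{lem1} applied to the shear $k_a=h-g$ (shown univalent and convex in the horizontal direction for $|a|\le2$), and necessity via the real-coefficient symmetry together with a non-real point at which $k_a$ is real. The paper picks the specific point $z_1$ with $(1+z_1)/(1-z_1)=e^{i\pi/a}$ and then chases the identities $h(z_1)-g(z_1)=h(\overline{z_1})-g(\overline{z_1})$ and $g(\overline{z_1})=\overline{g(z_1)}$ to get $f(z_1)=f(\overline{z_1})$; your packaging through the horizontal-shear identity $\operatorname{Im}f_{a,\lambda}=\operatorname{Im}k_a$ and the conjugation symmetry $f_{a,\lambda}(\overline z)=\overline{f_{a,\lambda}(z)}$ is a cleaner rendering of the same idea, and you also supply the geometric justification (via $w\mapsto w^{a}$) for the convexity-in-direction of $k_a(\D)$ that the paper simply asserts.
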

\begin{proof}
	We first consider the case $a=0$. Then $f_{0,\lambda}=h+\overline{g}$ satisfies the conditions  
	\begin{equation*}\label{T1-1}
		\begin{cases}
			h(z)-g(z)=k_0(z),\\
			\,{g'(z)}/{h'(z)}=\lambda z.
		\end{cases}
	\end{equation*}
Noting that $k_0(z)$ maps $\D$ onto a convex domain,
   then by Lemma \ref{lem1}, it follows that $f_{0,\lambda}(z)$ is univalent. 
    
   Next, suppose that $0<a\leq 2$. Then $k_a(z)$ maps $\D$ onto a region convex in the direction of the real axis.  Again, by applying Lemma \ref{lem1}, we conclude that $f_{a,\lambda}$ is univalent for $a\in(0,2]$. 
    
	Now, we prove that $f_{a,\lambda}$ is non-univalent  for $a>2$.  Note that $k_a(z)$ and $\lambda z$ have real coefficients. By \eqref{1}, we know that $h'(z)-g'(z)=k_a'(z)$, and ${g'(z)}/{h'(z)}=\lambda z$. Thus, the coefficients of $h$ and $g$ are also real numbers. 
    Consider the transformation 
    $$F(z)=\dfrac{1+z}{1-z},$$
    which maps the unit disk onto the right half-plane. 
    So there exists $z_1\in\D$ such that
    \begin{equation*}\label{T1-2}
        \dfrac{1+z_1}{1-z_1}=e^{i\frac{\pi}{a}}\quad(a>2),
    \end{equation*}
    which implies that $z_1$ is not a real number in $\D$.
    Therefore, we have 
    \begin{equation}\label{T1-3}
        k_a(z_1)=-\dfrac{1}{a}\quad(a>2).
    \end{equation}
    
 Suppose that $z_2=\overline{z_1}$. Since $k_a$ has real coefficients, by \eqref{T1-3}, it is easy to check that
   \begin{equation}\label{211}
       k_a(z_1)=\overline {k_a(z_1)}=k_a(\overline{z_1})=k_a(z_2).
   \end{equation}
   Note that  \begin{equation}\label{212}h(z)-g(z)=k_a(z).\end{equation}
Combining \eqref{211} and \eqref{212}, we get
    $$h(z_1)-g(z_1)=h(z_2)-g(z_2).$$ 
    By observing that the function $g$ also has real coefficients, it follows that  $g(z_2)=\overline{g(z_1)}$  
   and $g(z_1)=\overline{g(z_2)}$, hence $$f(z_1)=h(z_1)+\overline{g(z_1)}=h(z_2)+\overline{g(z_2)}=f(z_2).$$ This shows that $f_{a,\lambda}$ is not univalent when $a>2$. 

    The univalence for the case $-2\leq a<0$ is similar to that of $0<a\leq2$, and the non-univalence for the case 
    $a<-2$ is similar to that of $a>2$.
    We omit the details.
    The proof of Theorem \ref{T1} is thus completed.
\end{proof}  

\subsection{Pre-Schwarzian and Schwarzian norms}
In what follows, we derive the pre-Schwarzian and Schwarzian norms of the family $\mathcal{K}_{a,\lambda}$.
\begin{theorem}\label{T2}
	If $f=h+\bar{g}\in\mathcal{K}_{a,\lambda}$, then 
    \begin{equation}\label{31}
        P_f(z)=\dfrac{2(z+a)}{1-z^2}+\dfrac{\lambda}{1-\lambda z}+\dfrac{\lambda^2\bar{z}}{1-\abs{\lambda z}^2}
    \end{equation}
    and
     \begin{equation}\label{32}
       \|P_f(z)\|\leq2\left(1+\abs{a}\right)+2\lambda^2+\lambda.
    \end{equation}
   
\end{theorem}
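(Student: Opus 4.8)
The plan is to obtain the closed form \eqref{31} directly from the defining shear system \eqref{1} and then derive \eqref{32} by applying the triangle inequality to the three summands of $P_f$, bounding each one separately. First I would extract $h'$ from \eqref{1}: since $h'-g'=k_a'$ and $g'=\lambda z\,h'$, eliminating $g'$ gives $h'(z)=k_a'(z)/(1-\lambda z)$. Differentiating \eqref{4} yields $k_a'(z)=(1+z)^{a-1}/(1-z)^{a+1}$, so that
\[
\log h'(z)=(a-1)\log(1+z)-(a+1)\log(1-z)-\log(1-\lambda z).
\]
Logarithmic differentiation then produces $h''/h'=\frac{a-1}{1+z}+\frac{a+1}{1-z}+\frac{\lambda}{1-\lambda z}$, and combining the first two fractions over the common denominator $1-z^2$ gives $\frac{2(z+a)}{1-z^2}+\frac{\lambda}{1-\lambda z}$. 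Here the dilatation is $\omega(z)=\lambda z$, so $\omega'=\lambda$, $\overline{\omega}=\lambda\bar z$ and $1-\abs{\omega}^2=1-\abs{\lambda z}^2$; the dilatation term $\frac{\omega'\overline{\omega}}{1-\abs{\omega}^2}=\frac{\lambda^2\bar z}{1-\abs{\lambda z}^2}$ then supplies the last summand, establishing \eqref{31}.

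For the norm estimate I would multiply $P_f(z)$ by $1-|z|^2$ and bound the three pieces by the triangle inequality. For the first piece, the elementary inequality $\frac{1-|z|^2}{|1-z^2|}\le 1$ (which follows from $|1-z^2|^2\ge(1-|z|^2)^2$, i.e.\ from $\cos 2\theta\le 1$ after writing $z=re^{i\theta}$) together with $|z+a|\le 1+\abs{a}$ gives $\left|\frac{2(z+a)}{1-z^2}\right|(1-|z|^2)\le 2(1+\abs{a})$. For the third piece, using $1-\abs{\lambda z}^2\ge 1-|z|^2$ and $|z|\le 1$ yields $\left|\frac{\lambda^2\bar z}{1-\abs{\lambda z}^2}\right|(1-|z|^2)\le\lambda^2$.

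The main obstacle is the middle piece, where the crude bound $|1-\lambda z|\ge 1-\lambda$ is too weak to reach the stated constant. Instead I would first observe that for fixed $|z|=r$ the quantity $|1-\lambda z|$ is minimized at $z=r\in[0,1)$, reducing the problem to the auxiliary inequality $\frac{1-r^2}{1-\lambda r}\le 1+\lambda$. This is equivalent to $\lambda(1-r)+r^2-\lambda^2 r\ge 0$, which I would verify by a one-variable minimization over $r\in[0,1]$ (the interior minimum occurs at $r=\tfrac{\lambda(1+\lambda)}{2}$, where the expression equals $\lambda-r^2\ge 0$ for $\lambda\in[0,1)$). Consequently $\left|\frac{\lambda}{1-\lambda z}\right|(1-|z|^2)\le\lambda(1+\lambda)=\lambda+\lambda^2$, and adding the three bounds gives $\|P_f\|\le 2(1+\abs{a})+\lambda+2\lambda^2$, which is exactly \eqref{32}.
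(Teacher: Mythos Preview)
Your derivation of \eqref{31} is correct and follows exactly the paper's route: compute $k_a'(z)=(1+z)^{a-1}/(1-z)^{a+1}$, write $h'=k_a'/(1-\lambda z)$, differentiate logarithmically to obtain $h''/h'$, and then adjoin the dilatation contribution coming from $\omega(z)=\lambda z$. For \eqref{32} the paper gives no details beyond ``by calculation''; your term-by-term triangle-inequality argument---in particular the one-variable minimization establishing $\dfrac{1-r^2}{1-\lambda r}\le 1+\lambda$ for the middle summand---is a correct and complete way to fill this in, and the three bounds $2(1+|a|)$, $\lambda+\lambda^2$, $\lambda^2$ add up to the stated constant.
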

\begin{proof}
	We begin by computing the derivative of the generalized Koebe function
\begin{equation}\label{T2-3} 	  k_a'(z)=\dfrac{{(1+z)}^{a-1}}{{(1-z)}^{a+1}}.
	\end{equation}
    For $f=h+\overline{g}\in\mathcal{K}_{a,\lambda}$, we have 
	 \begin{equation}\label{T2-4}
	    h'(z)=\dfrac{k_a'(z)}{1-\lambda z}.
	\end{equation}
    Differentiating \eqref{T2-4} logarithmically, it yields
    \begin{equation}\label{T2-5}
        \dfrac{h''(z)}{h'(z)}=\dfrac{2(z+a)}{1-z^2}+\dfrac{\lambda}{1-\lambda z}.
    \end{equation}
     By calculation, the assertions \eqref{31} and \eqref{32} of Theorem \ref{T2} follow from \eqref{T2-5}.
\end{proof}

\begin{theorem}\label{T3}
	If $f=h+\bar{g}\in\mathcal{K}_{a,\lambda}$, then 
    \begin{equation}\label{T3-1}
        \begin{aligned}
S_f(z)=&\dfrac{2\left(1-a^2\right )}{{\left(1-z^2\right)}^2}+\dfrac{\lambda^2}{2\left(1-\lambda z\right)^2}-\dfrac{2\lambda(z+a)}{\left(1-z^2\right)(1-\lambda z)}\\
	&+\dfrac{\lambda^2\overline{z}[-3\lambda z^2+2(1-a\lambda )z+2a+\lambda]}{(1-\lambda^2|z|^2)(1-z^2)(1-\lambda z)}+\dfrac{3\lambda^4\overline{z}^2}{\left(1-\lambda^2|z|^2\right)^2}
        \end{aligned}
    \end{equation}
    and
    \begin{equation}\label{T3-2}
        \|S_f(z)\|\leq\lambda^4+2\lambda^3(\abs{a}+1)+\lambda^2\left(4\abs{a}+\dfrac{13}{2}\right)+2\lambda(\abs{a}+2)+2\abs{1-a^2}.
    \end{equation}
\end{theorem}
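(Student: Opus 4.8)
The plan is to obtain the closed form \eqref{T3-1} by substituting into the Hern\'andez--Mart\'in Schwarzian formula and then to deduce \eqref{T3-2} by the triangle inequality together with elementary disk estimates. Throughout I would use that, by the defining system \eqref{1}, the dilatation is $\omega(z)=g'(z)/h'(z)=\lambda z$, so that $\omega'=\lambda$, $\omega''=0$, $\overline{\omega}=\lambda\overline{z}$ and $|\omega|^2=\lambda^2|z|^2$, and that $h''/h'=2(z+a)/(1-z^2)+\lambda/(1-\lambda z)$ is already recorded in \eqref{T2-5}. The key structural remark is that $S_f$ splits into the classical analytic Schwarzian $S_h=h'''/h'-\tfrac32(h''/h')^2$ of $h$, which accounts for the first three terms of \eqref{T3-1}, and a harmonic correction governed by $\omega$, which accounts for the last two; I would handle these two blocks in turn.

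For the analytic block I would write $h''/h'=A+B$ with $A:=k_a''/k_a'=2(z+a)/(1-z^2)$ and $B:=\lambda/(1-\lambda z)$, and use the identity $S_h=(h''/h')'-\tfrac12(h''/h')^2=(A'-\tfrac12 A^2)+(B'-\tfrac12 B^2)-AB$. Here $A'-\tfrac12 A^2$ is the Schwarzian of the generalized Koebe function: expanding $A=(a-1)/(1+z)+(a+1)/(1-z)$ in partial fractions and simplifying gives $A'-\tfrac12 A^2=2(1-a^2)/(1-z^2)^2$, the first term of \eqref{T3-1}. A one-line computation gives $B'-\tfrac12 B^2=\lambda^2/[2(1-\lambda z)^2]$ and $-AB=-2\lambda(z+a)/[(1-z^2)(1-\lambda z)]$, the second and third terms.

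For the harmonic block I would substitute $\omega=\lambda z$ into $\frac{\overline{\omega}}{1-|\omega|^2}\big(\tfrac{h''}{h'}\omega'-\omega''\big)-\tfrac32\big(\tfrac{\omega'\overline{\omega}}{1-|\omega|^2}\big)^2$. Since $\omega''=0$, the first piece equals $\frac{\lambda^2\overline{z}}{1-\lambda^2|z|^2}\cdot\frac{h''}{h'}$; placing $A+B$ over the common denominator $(1-z^2)(1-\lambda z)$ collapses the numerator to $2(z+a)(1-\lambda z)+\lambda(1-z^2)=-3\lambda z^2+2(1-a\lambda)z+2a+\lambda$, which is exactly the fourth term of \eqref{T3-1}, and the remaining piece yields the final term in $\overline{z}^2/(1-\lambda^2|z|^2)^2$. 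Assembling both blocks gives \eqref{T3-1}.

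Finally, for \eqref{T3-2} I would take moduli in \eqref{T3-1}, multiply by $(1-|z|^2)^2$, and bound each of the five summands using the elementary inequalities $|1-z^2|\ge 1-|z|^2$, $|1-\lambda z|\ge 1-\lambda|z|\ge 1-|z|$, $1-\lambda^2|z|^2\ge 1-|z|^2$, $|z+a|\le|a|+|z|<|a|+1$, and $(1-|z|^2)/(1-|z|)=1+|z|<2$, then collect powers of $\lambda$ and $|a|$. I expect this last step to be the main obstacle: the harmonic summands mix $z$ and $\overline{z}$, so one must estimate ratios such as $(1-|z|^2)^2|z|\,|p(z)|\big/\big[(1-\lambda^2|z|^2)|1-z^2|\,|1-\lambda z|\big]$, where $p(z)=-3\lambda z^2+2(1-a\lambda)z+2a+\lambda$, and producing the sharp constants (for instance the coefficient $13/2$ of $\lambda^2$) seems to require tracking how the third, fourth, and fifth terms contribute jointly rather than bounding each of them in isolation.
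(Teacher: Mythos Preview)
Your derivation of \eqref{T3-1} is correct and is essentially the paper's argument, only organized a bit more transparently: the paper computes $h'''$ directly from the quotient rule and records the intermediate identity
\[
S_f=\frac{k_a'''}{k_a'}+\frac{2\lambda}{1-\lambda z}\frac{k_a''}{k_a'}+\frac{2\lambda^2}{(1-\lambda z)^2}-\frac32\Bigl(\frac{k_a''}{k_a'}+\frac{\lambda}{1-\lambda z}\Bigr)^2+\frac{\lambda^2\bar z}{1-\lambda^2|z|^2}\Bigl(\frac{k_a''}{k_a'}+\frac{\lambda}{1-\lambda z}\Bigr)-\frac32\Bigl(\frac{\lambda^2\bar z}{1-\lambda^2|z|^2}\Bigr)^2,
\]
which is exactly your $(A'-\tfrac12A^2)+(B'-\tfrac12B^2)-AB$ plus the harmonic correction, before simplification. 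Your use of the additive identity for the pre-Schwarzian of a product (i.e.\ $h'=k_a'/(1-\lambda z)$ gives $h''/h'=A+B$ and hence $S_h=S_{k_a}+(B'-\tfrac12B^2)-AB$) is the cleaner way to reach the same three analytic terms, and your treatment of the $\omega=\lambda z$ block matches the paper's line for line.

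For the norm bound \eqref{T3-2} the paper gives no argument beyond ``by \eqref{T3-6}, we deduce that \eqref{T3-1} and \eqref{T3-2} hold'', so there is nothing substantive to compare. Your plan---triangle inequality on the five summands together with $|1-z^2|\ge 1-|z|^2$, $|1-\lambda z|\ge 1-\lambda|z|\ge 1-|z|$, $1-\lambda^2|z|^2\ge 1-|z|^2$---is the natural one, and your caveat that the precise constants (e.g.\ the $13/2$) come from grouping the cross terms rather than from bounding each summand in isolation is well taken; the paper does not supply those details either.
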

\begin{proof}
     By \eqref{4} and \eqref{1}, we have
        \begin{equation}\label{T3-3}
            k_a''(z)=\dfrac{2(a+z)}{(1-z)^4}\left(\dfrac{1+z}{1-z}\right)^{a-2},
        \end{equation}
        \begin{equation}\label{T3-4}
            k_a'''(z)=\dfrac{2\left(1+z\right)^{a-3}}{\left(1-z\right)^{a+3}}\left(3z^2+6az+2a^2+1\right)
        \end{equation}
        and
         \begin{equation}\label{T3-5}
            h'''(z)=\dfrac{k_a'''(z)(1-\lambda z)^2+2\lambda\left[k_a''(z)(1-\lambda z)+\lambda k_a'(z)\right]}{(1-\lambda z)^3}.
        \end{equation}
  It follows from \eqref{T2-3}, \eqref{T2-5}, \eqref{T3-3}, \eqref{T3-4}, and \eqref{T3-5} that
    \begin{equation}\label{T3-6}
        \begin{aligned}
           S_f(z)=&\dfrac{k_a'''(z)}{k_a'(z)}+\dfrac{2\lambda k_a''(z)}{(1-\lambda z)k_a'(z)}+\dfrac{2\lambda^2}{(1-\lambda z)^2}-\dfrac{3}{2}\left(\dfrac{k_a''(z)}{k_a'(z)}+\dfrac{\lambda }{1-\lambda z}\right)^2\\
    &+\dfrac{\lambda^2\bar{z} }{1-\abs{\lambda z}^2}\left(\dfrac{k_a''(z)}{k_a'(z)}+\dfrac{\lambda }{1-\lambda z}\right)-\dfrac{3}{2}\left(\dfrac{\lambda^2\bar{z} }{1-\abs{\lambda z}^2}\right)^2.
        \end{aligned}
    \end{equation}
By \eqref{T3-6}, we deduce that \eqref{T3-1} and \eqref{T3-2} hold. 
   \end{proof} 

   \begin{rem}
{\rm By setting $a=2$ and $\lambda=0$ in Theorems \ref{T2} and \ref{T3}, they coincide with the classical results
 involving pre-Schwarzian and Schwarzian norms of univalent analytic functions (cf. \cite{b}).}
\end{rem}

\subsection{Coefficient estimates}
We now provide the estimates of the initial coefficients of $f\in\mathcal{K}_{a,\lambda}$.  
\begin{theorem}\label{T4}
	If $f=h+\bar{g}\in\mathcal{K}_{a,\lambda}$, then 
    $$|a_2|\leq|a|+\frac{\lambda}{2},$$ 
	$$|a_3|\leq\dfrac{1}{3}\left(\lambda^2+2\abs{a}\lambda+2a^2+1\right),$$
	$$|a_4|\leq\dfrac{1}{3}\abs{a}^3+\dfrac{2}{3}\abs{a}+
        \dfrac{1}{4}\lambda\left(\lambda^2+2\abs{a}\lambda+2a^2+1\right),$$
    $$|b_3|\leq\dfrac{1}{3}\lambda^2+\dfrac{2}{3}\abs{a}\lambda$$
and $$|b_4|\leq\dfrac{1}{4}\lambda\left(\lambda^2+2\abs{a}\lambda+2a^2+1\right).$$
All of these inequalities are sharp.
\end{theorem}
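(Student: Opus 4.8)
The plan is to extract the Taylor coefficients of $h$ and $g$ directly from the differential system \eqref{1} rather than from the closed-form hypergeometric expressions for $h$ and $g$. The two relations $h'(z)-g'(z)=k_a'(z)$ and $g'(z)=\lambda z\,h'(z)$ give $h'(z)=k_a'(z)/(1-\lambda z)$ and $g'(z)=\lambda z\,k_a'(z)/(1-\lambda z)$, exactly as in \eqref{T2-4}. First I would expand $k_a'(z)=(1+z)^{a-1}(1-z)^{-(a+1)}$ as a power series $k_a'(z)=\sum_{n\ge 0}c_n z^n$ through order $z^3$; a direct multiplication of the two binomial series yields $c_0=1$, $c_1=2a$, $c_2=2a^2+1$, and $c_3=\tfrac{2}{3}a^3+\tfrac{4}{3}a$. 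Multiplying by the geometric series $1/(1-\lambda z)=\sum_k \lambda^k z^k$ and integrating term by term then produces the coefficients $a_n$ of $h$, while multiplying instead by $\lambda z/(1-\lambda z)$ and integrating gives the coefficients $b_n$ of $g$. This converts the problem into bookkeeping of a Cauchy product.

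Carrying this out, the coefficient of $z^{n-1}$ in $h'$ is $\sum_{j=0}^{n-1}c_j\lambda^{n-1-j}$, so $a_n=\tfrac{1}{n}\sum_{j=0}^{n-1}c_j\lambda^{n-1-j}$; similarly the coefficient of $z^{n-1}$ in $g'$ is $\sum_{j=0}^{n-2}c_j\lambda^{n-1-j}$, giving $b_n=\tfrac{1}{n}\sum_{j=0}^{n-2}c_j\lambda^{n-1-j}$. Substituting the computed $c_j$ yields $a_2=a+\lambda/2$, $a_3=\tfrac13(2a^2+1+2a\lambda+\lambda^2)$, $a_4=\tfrac13 a^3+\tfrac23 a+\tfrac14\lambda(2a^2+1+2a\lambda+\lambda^2)$, together with $b_3=\tfrac13(2a\lambda+\lambda^2)$ and $b_4=\tfrac14\lambda(2a^2+1+2a\lambda+\lambda^2)$. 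The stated upper bounds then follow at once from the triangle inequality applied to these explicit expressions, using $0\le\lambda<1$, $a^2=\abs{a}^2$, and $\abs{a}^3=\abs{a^3}$: for instance $\abs{a_4}\le\tfrac13\abs{a}^3+\tfrac23\abs{a}+\tfrac14\lambda(\lambda^2+2\abs{a}\lambda+2a^2+1)$ is immediate once one notes $2a^2\ge 0$ is already nonnegative so no absolute values are lost there.

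For sharpness I would exhibit, for each coefficient, a choice of sign of $a$ (equivalently, a rotation of the defining data) that turns every triangle-inequality step into an equality. Since all the $c_j$ are real and $\lambda\ge 0$, taking $a\ge 0$ already makes $a_2,a_3,a_4$ equal to their bounds, because every term in those expressions is then nonnegative; the bounds for $b_3$ and $b_4$ are attained for the same reason. Thus each inequality is saturated by a genuine member $f_{a,\lambda}$ of the family with $a=\abs{a}\ge 0$, which establishes sharpness without needing a separate extremal construction.

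I expect the only real obstacle to be the accurate computation of $c_2$ and $c_3$, i.e. the low-order Taylor coefficients of $(1+z)^{a-1}(1-z)^{-(a+1)}$, where sign and binomial-coefficient errors are easy to make; everything downstream is a mechanical Cauchy product plus the triangle inequality. A convenient cross-check is to verify that setting $a=2,\ \lambda=0$ recovers the classical Koebe coefficients $a_n=n$ (so $a_2=2$, $a_3=3$, $a_4=4$) and that letting $a=2,\ \lambda\to 1^-$ reproduces the known coefficients of the harmonic Koebe function $K(z)$, which serves to confirm that the formulas for $c_j$ and the assembled $a_n,b_n$ are correct.
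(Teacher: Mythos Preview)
Your approach is essentially the paper's: both extract the exact Taylor coefficients of $h$ and $g$ from the system \eqref{1} (the paper via the pair $a_n-b_n=[z^n]k_a$ and $nb_n=(n-1)\lambda a_{n-1}$, you via the equivalent Cauchy product for $h'=k_a'/(1-\lambda z)$), then apply the triangle inequality. One slip to fix: your stated value $c_3=\tfrac{2}{3}a^3+\tfrac{4}{3}a$ is off by a factor of $2$; the correct value is $c_3=\tfrac{4}{3}a^3+\tfrac{8}{3}a$ (consistent with the paper's $k_a^{(4)}(0)/4!=\tfrac13 a^3+\tfrac23 a$), and indeed your own formula $a_4=c_3/4+\tfrac14\lambda(\lambda^2+2a\lambda+2a^2+1)$ only reproduces the claimed $a_4$ with the corrected $c_3$.
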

\begin{proof} Suppose that $f=h+\bar{g}\in\mathcal{K}_{a,\lambda}$.
	It follows from \eqref{1} that
    \begin{equation}\label{T4-1}
        \left(1-b_1\right)z+\sum\limits_{n=2}^{\infty}(a_n-b_n)z^n=k_a(z)
    \end{equation}
    and
    \begin{equation}\label{T4-2}
        \sum\limits_{n=2}^{\infty}nb_nz^{n-1}=\sum\limits_{n=2}^{\infty}\lambda na_nz^n.
    \end{equation}
	By \eqref{T2-3}, \eqref{T3-3}, and \eqref{T3-4}, we get 
    $$\dfrac{k_a''(0)}{2!}=a,$$ 
    \begin{equation*}\label{T4-3}
        \dfrac{k_a'''(0)}{3!}=\dfrac{2}{3}a^2+\dfrac{1}{3}
    \end{equation*}
    and 
     \begin{equation*}\label{T4-4}
        \dfrac{{k_a}^{(4)}(0)}{4!}=\dfrac{1}{3}a^3+\dfrac{2}{3}a.
    \end{equation*}
	By comparing the coefficients of $z^2$, $z^3$ and $z^4$ on both sides of \eqref{T4-1}, we obtain
	\begin{equation}\label{T4-5}
		\begin{cases}
			a_2-b_2=a,\\
			a_3-b_3=\frac{2}{3}a^2+\frac{1}{3},\\
			a_4-b_4=\frac{1}{3}a^3+\frac{2}{3}a.
		\end{cases}
	\end{equation}
	Similarly, it follows from \eqref{T4-2} that
	\begin{equation}\label{T4-6}
		\begin{cases}
			2b_2=\lambda,\\
			3b_3=2\lambda a_2,\\
			4b_4=3\lambda a_3.
		\end{cases}
	\end{equation}
   Combining \eqref{T4-5} and \eqref{T4-6} yields
    \begin{equation*}\label{T4-7}
        a_2=a+\dfrac{\lambda}{2},
    \end{equation*}
     \begin{equation*}\label{T4-8}
        a_3=\dfrac{1}{3}\left(\lambda^2+2a\lambda+2a^2+1\right),
    \end{equation*}
     \begin{equation*}\label{T4-9}
        a_4=\dfrac{1}{3}a^3+\dfrac{2}{3}a+
        \dfrac{1}{4}\lambda\left(\lambda^2+2a\lambda+2a^2+1\right),
    \end{equation*}
  
     \begin{equation*}\label{T4-10}
        b_3=\dfrac{1}{3}\lambda^2+\dfrac{2}{3}a\lambda
    \end{equation*}  and 
     \begin{equation*}\label{T4-11}
        b_4=\dfrac{1}{4}\lambda\left(\lambda^2+2a\lambda+2a^2+1\right).
    \end{equation*}
The inequalities in Theorem \ref{T4} follow directly from the above equations.
\end{proof}

\begin{rem}
{\rm By setting $a=2$ and $\lambda\rightarrow 1^-$ in Theorem \ref{T4}, the coefficients estimates coincide with the harmonic Koebe function introduced by Clunie and Sheil-Small
 \cite{cs}.}
\end{rem}

\subsection{Growth and area theorems for the class $\mathcal{K}_{a,\lambda}$}
Finally, we derive the growth and area theorems for the class $\mathcal{K}_{a,\lambda}$.
\begin{theorem}\label{T5}
	Let $f=h+\bar{g}\in\mathcal{K}_{a,\lambda}$. Then
    \begin{equation}\label{T5-1}
    \begin{split}
     \displaystyle\int_{0}^{r}\dfrac{(1-\lambda \rho)(1-\rho)^{a-1}}{(1+\lambda \rho)(1+\rho)^{a+1}}\,d\rho\leq|f(z)|\leq \displaystyle\int_{0}^{r}\dfrac{(1+\lambda \rho)(1+\rho)^{a-1}}{(1-\lambda \rho)(1-\rho)^{a+1}}\,d\rho\quad(a\geq 1),
    \end{split}
    \end{equation}

 \begin{equation}\label{T5-7}
    \begin{aligned}
     \displaystyle\int_{0}^{r}\dfrac{1-\lambda \rho}{(1+\lambda \rho)(1+\rho)^{2}}\,d\rho \leq|f(z)|\leq   
     \displaystyle\int_{0}^{r}\dfrac{1+\lambda \rho}{(1-\lambda \rho)(1-\rho)^{2}}\,d\rho\quad(-1<a<1)
    \end{aligned}
    \end{equation}
and
     \begin{equation}\label{T5-2}
    \begin{aligned}
     \displaystyle\int_{0}^{r}\dfrac{(1-\lambda \rho)(1+\rho)^{a-1}}{(1+\lambda \rho)(1-\rho)^{a+1}}\,d\rho \leq|f(z)|\leq   
     \displaystyle\int_{0}^{r}\dfrac{(1+\lambda \rho)(1-\rho)^{a-1}}{(1-\lambda \rho)(1+\rho)^{a+1}}\,d\rho
     \quad(a\leq -1).
    \end{aligned}
    \end{equation}
\end{theorem}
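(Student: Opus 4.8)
The plan is to reduce both inequalities to pointwise estimates of $|h'|$ on the circles $|w|=t$, exploiting the shear relations. From \eqref{T2-4} together with the dilatation $g'=\lambda z\,h'$ one gets, on $|w|=t$, that $|g'(w)|=\lambda t\,|h'(w)|$, so that $|h'|\pm|g'|=(1\pm\lambda t)\,|h'|$; and \eqref{T2-3} yields
\begin{equation*}
|h'(z)|=\frac{|1+z|^{\,a-1}}{|1-z|^{\,a+1}\,|1-\lambda z|}.
\end{equation*}
Everything then hinges on the extrema over $|w|=t$ of the three factors $|1+w|$, $|1-w|$, $|1-\lambda w|$, which range in $[1-t,1+t]$, $[1-t,1+t]$, and $[1-\lambda t,1+\lambda t]$ respectively, with $|1+w|$ maximal and $|1-w|,|1-\lambda w|$ minimal at $w=t$, and the opposite pattern at $w=-t$.

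For the upper bound I would fix $z=re^{i\theta}$ and integrate $df=h'\,dz+\overline{g'}\,\overline{dz}$ along the radial segment $[0,z]$; the triangle inequality gives
\begin{equation*}
|f(z)|\le\int_0^r(1+\lambda t)\Bigl(\max_{|w|=t}|h'(w)|\Bigr)\,dt .
\end{equation*}
For the lower bound I would invoke univalence (Theorem \ref{T1}, i.e.\ $-2\le a\le2$): take $w^\ast\in\partial f(\D_r)$ nearest to the origin and pull the segment $[0,w^\ast]$ back by $f^{-1}$ to a curve $\Gamma$ joining $0$ to the circle $|\zeta|=r$. Since $|df|\ge(|h'|-|g'|)\,|d\zeta|\ge0$, the coarea (Banach indicatrix) inequality gives
\begin{equation*}
|f(z)|\ge\operatorname{dist}\bigl(0,\partial f(\D_r)\bigr)\ge\int_0^r(1-\lambda t)\Bigl(\min_{|w|=t}|h'(w)|\Bigr)\,dt .
\end{equation*}

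The remaining work is to evaluate these circle extrema in the three regimes, and this is where the case split originates. When $a\ge1$ the exponents satisfy $a-1\ge0$ and $a+1>0$, so all three factors are simultaneously extremal on the real axis: $|h'|$ is largest at $w=t$ and smallest at $w=-t$, and the radial points $z=\pm r$ give the exact circle extrema, reproducing \eqref{T5-1}. When $-1<a<1$ or $a\le-1$ the factors are no longer co-monotone in $\theta$, since $|1+w|^{a-1}$ pushes the extremum toward $w=-t$ while $|1-w|^{-(a+1)}$ and $|1-\lambda w|^{-1}$ push toward $w=t$; I would then bound each factor separately by its extreme value over the circle, choosing the endpoint $1\pm t$ (resp.\ $1\pm\lambda t$) according to the sign of $a-1$ and of $-(a+1)$. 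For $-1<a<1$ both $a-1<0$ and $-(a+1)<0$, which collapses $|1+w|^{a-1}|1-w|^{-(a+1)}$ to $(1\mp t)^{-2}$ and produces \eqref{T5-7}; for $a\le-1$ the sign of $-(a+1)$ reverses and the corresponding endpoints give \eqref{T5-2}.

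The hard part is precisely this case analysis. For $a\ge1$ the three factors of $|h'|$ conspire to be extremal at a common real point, so a single radial extremizer suffices; for $|a|<1$ and $a\le-1$ the competing monotonicities prevent such an argument and force the weaker factorwise estimates, so one must track the signs of $a-1$ and $a+1$ carefully to select the correct boundary value of each factor in each regime. A secondary point to handle cleanly is that the lower bound genuinely relies on global univalence, so it is asserted on the range $-2\le a\le2$ supplied by Theorem \ref{T1}, whereas the upper bound, resting only on radial integration, needs no such restriction.
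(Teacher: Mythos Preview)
Your proposal is correct and follows essentially the same approach as the paper: obtain pointwise bounds on $|h'|$ over circles by estimating each factor $|1\pm z|$, $|1-\lambda z|$ separately according to the sign of the relevant exponent, then integrate along a radial segment for the upper bound and along the $f$-preimage of a segment from the origin for the lower bound. Your treatment is in fact more explicit than the paper's, which simply asserts the three $|h'|$ bounds \eqref{T5-3}--\eqref{T5-4} without justifying the case split, and you correctly flag that the lower-bound argument relies on univalence (hence implicitly on $|a|\le 2$ via Theorem~\ref{T1}), a point the paper leaves unstated.
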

\begin{proof}
Let $f=h+\bar{g}\in\mathcal{K}_{a,\lambda}$ and $|\xi|=r\leq 1$.  By \eqref{T2-3} and \eqref{T2-4}, we get 
\begin{equation}\label{T5-3}
   \dfrac{(1-r)^{a-1}}{(1+\lambda r)(1+r)^{a+1}}\leq|h'(\xi)|\leq \dfrac{(1+r)^{a-1}}{(1-\lambda r)(1-r)^{a+1}}\quad(a\geq 1),
\end{equation}

\begin{equation}\label{T5-8}
\dfrac{1}{(1+\lambda r)(1+r)^{2}} \leq|h'(\xi)|\leq   
     \dfrac{1}{(1-\lambda r)(1-r)^{2}}\quad(-1<a<1)
\end{equation}
and
\begin{equation}\label{T5-4}
\dfrac{(1+r)^{a-1}}{(1+\lambda r)(1-r)^{a+1}} \leq|h'(\xi)|\leq   \dfrac{(1-r)^{a-1}}{(1-\lambda r)(1+r)^{a+1}}\quad(a\leq -1).
\end{equation}

For $a\geq 1$, let $\Gamma$ be the line segment joining $0$ and $z$.
Then 
\begin{equation}\label{T5-5}
    \begin{aligned}
        |f(z)|&=\left|\displaystyle\int_{\Gamma}\dfrac{\partial f}{\partial \xi}\,d\xi+\dfrac{\partial f}{\partial \bar{\xi}}\,d\bar{\xi}\right|\\
        &\leq\displaystyle\int_{\Gamma}\left(|h'(\xi)|+|g'(\xi)|\right)\,|d\xi|\\
        &=\displaystyle\int_{\Gamma}(1+\lambda|\xi|)|h'(\xi)|\,|d\xi|\\
        & \leq\displaystyle\int_{0}^{r}\dfrac{(1+\lambda \rho)(1+\rho)^{a-1}}{(1-\lambda \rho)(1-\rho)^{a+1}}\,d\rho.\\
    \end{aligned}
\end{equation}
Moreover, let $\widetilde{\Gamma}$  be the preimage of the line segment joining $0$ and $f(z)$ under $f$. Then
\begin{equation}\label{T5-6}
    \begin{aligned}
        |f(z)|&=\left|\displaystyle\int_{\widetilde{\Gamma}}\dfrac{\partial f}{\partial \xi}\,d\xi+\dfrac{\partial f}{\partial \bar{\xi}}\,d\bar{\xi}\right|\\
        &\geq\displaystyle\int_{\widetilde\Gamma}\left(|h'(\xi)|-|g'(\xi)|\right)\,|d\xi|\\
        &=\displaystyle\int_{\widetilde\Gamma}(1-\lambda|\xi|)|h'(\xi)|\,|d\xi|\\
        &\geq\displaystyle\int_{0}^{r}\dfrac{(1-\lambda \rho)(1-\rho)^{a-1}}{(1+\lambda \rho)(1+\rho)^{a+1}}\,d\rho.\\
    \end{aligned}
\end{equation}
By \eqref{T5-5} and \eqref{T5-6}, we conclude that the assertion \eqref{T5-1} of Theorem \ref{T5} holds. 

Similarly, by
\eqref{T5-8} and \eqref{T5-4}, we get \eqref{T5-7} and \eqref{T5-2} for the cases $-1<a<1$ and $a\leq -1$, respectively.
\end{proof}

\begin{rem}{\rm
 By setting $a=2$ and $\lambda\rightarrow 1^-$ in Theorem \ref{T5}, it reduces to the classical
 growth theorem of univalent harmonic mappings in
 \cite[Theorem 1]{sh}.}
\end{rem}


Let $\mathcal{A}(f(\D_r))$ denote the area of $f(\D_r)$, where $\D_r:=r\D$ for $0<r<1$.
We prove the area theorem for the family $\mathcal{K}_{a,\lambda}$.
\begin{theorem}\label{T7}
	Let $f=h+\bar{g}\in\mathcal{K}_{a,\lambda}$. Then
     \begin{equation}\label{T7-2}
    \begin{split}
     2\pi\displaystyle\int_{0}^{r}\dfrac{(\rho-\lambda^2 \rho^3)(1-\rho)^{2(a-1)}}{(1+\lambda \rho)^2(1+\rho)^{2(a+1)}}\,d\rho\leq\mathcal{A}(f(\D_r))\leq 2\pi \displaystyle\int_{0}^{r}\dfrac{(\rho-\lambda^2 \rho^3)(1+\rho)^{2(a-1)}}{(1-\lambda \rho)^2(1-\rho)^{2(a+1)}}\,d\rho
    \end{split}
    \end{equation}
     for $a\geq 1$,

 \begin{equation}\label{T7-3}
    \begin{aligned}
     2\pi \displaystyle\int_{0}^{r}\dfrac{\rho-\lambda^2 \rho^3}{(1+\lambda \rho)^2(1+\rho)^{4}}\,d\rho \leq\mathcal{A}(f(\D_r))\leq   
     2\pi \displaystyle\int_{0}^{r}\dfrac{\rho-\lambda^2 \rho^3}{(1-\lambda \rho)^2(1-\rho)^{4}}\,d\rho
    \end{aligned}
    \end{equation}
     for $-1<a<1$,
and
     \begin{equation}\label{T7-4}
    \begin{aligned}
     2\pi \displaystyle\int_{0}^{r}\dfrac{(\rho-\lambda^2 \rho^3)(1+\rho)^{2(a-1)}}{(1+\lambda \rho)^2(1-\rho)^{2(a+1)}}\,d\rho \leq\mathcal{A}(f(\D_r))\leq   
     2\pi \displaystyle\int_{0}^{r}\dfrac{(\rho-\lambda^2 \rho^3)(1-\rho)^{2(a-1)}}{(1-\lambda \rho)^2(1+\rho)^{2(a+1)}}\,d\rho
    \end{aligned}
    \end{equation}
    for $a\leq -1$.
\end{theorem}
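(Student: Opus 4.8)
The plan is to express the area of $f(\D_r)$ as the integral of the Jacobian and then substitute the growth bounds for $|h'|$ already obtained in Theorem \ref{T5}. For a sense-preserving harmonic mapping $f = h + \bar{g}$, the area (counted with multiplicity) satisfies
\begin{equation*}
\mathcal{A}(f(\D_r)) = \iint_{\D_r} J_f(\xi)\, dA(\xi) = \iint_{\D_r}\left(|h'(\xi)|^2 - |g'(\xi)|^2\right) dA(\xi).
\end{equation*}
The key simplification comes from the dilatation relation in \eqref{1}: since $g'(z)/h'(z) = \lambda z$, one has $|g'(\xi)| = \lambda|\xi|\,|h'(\xi)|$, and therefore $J_f(\xi) = |h'(\xi)|^2\left(1 - \lambda^2|\xi|^2\right)$.

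First I would pass to polar coordinates $\xi = \rho e^{i\theta}$, so that $dA = \rho\,d\rho\,d\theta$ and
\begin{equation*}
\mathcal{A}(f(\D_r)) = \int_0^r\!\!\int_0^{2\pi} |h'(\rho e^{i\theta})|^2\,\left(1 - \lambda^2\rho^2\right)\,\rho\,d\theta\,d\rho.
\end{equation*}
Since the weight $\left(1-\lambda^2\rho^2\right)\rho = \rho - \lambda^2\rho^3$ is nonnegative on $[0,1)$ and depends only on $\rho$, the monotone estimates for $|h'|$ may be inserted directly after squaring. For $a \geq 1$, squaring \eqref{T5-3} gives
\begin{equation*}
\frac{(1-\rho)^{2(a-1)}}{(1+\lambda\rho)^2(1+\rho)^{2(a+1)}} \leq |h'(\rho e^{i\theta})|^2 \leq \frac{(1+\rho)^{2(a-1)}}{(1-\lambda\rho)^2(1-\rho)^{2(a+1)}}.
\end{equation*}

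Multiplying these bounds by $\rho - \lambda^2\rho^3$, performing the trivial $\theta$-integration (which contributes the factor $2\pi$), and integrating in $\rho$ over $[0,r]$ then yields \eqref{T7-2}. The remaining cases are entirely parallel: squaring \eqref{T5-8} for $-1 < a < 1$ produces \eqref{T7-3}, and squaring \eqref{T5-4} for $a \leq -1$ produces \eqref{T7-4}. None of these steps is technically demanding; the only point that deserves care is the identity $\mathcal{A}(f(\D_r)) = \iint_{\D_r} J_f\,dA$ itself, which requires $f$ to be at least locally univalent and sense-preserving. This is guaranteed by $J_f(\xi) = |h'(\xi)|^2\left(1-\lambda^2|\xi|^2\right) > 0$ on $\D$, so that the Jacobian integral computes the correct (multiplicity-counted) area even for those values $|a| > 2$ where Theorem \ref{T1} shows $f$ fails to be globally univalent.
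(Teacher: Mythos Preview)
Your proposal is correct and follows essentially the same approach as the paper: express $\mathcal{A}(f(\D_r))$ as $\iint_{\D_r}(1-\lambda^2|\xi|^2)|h'(\xi)|^2\,dA$ via the dilatation identity, then insert the squared bounds \eqref{T5-3}, \eqref{T5-8}, \eqref{T5-4} from Theorem \ref{T5}. Your version is in fact slightly more explicit about the polar-coordinate step and the justification of the Jacobian area formula than the paper's own proof.
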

\begin{proof}
Suppose that $f=h+\bar{g}\in\mathcal{K}_{a,\lambda}$. Then
\begin{equation}\label{T7-1}
    \begin{aligned}
        \mathcal{A}(f(\D_r))&=\displaystyle\iint_{\D_r}\left(|h'(\xi)|^2-|g'(\xi)|^2\right)\,dx\,dy\\
        &=\displaystyle\iint_{\D_r}\left(1-\lambda^2|\xi|^2\right)|h'(\xi)|^2\,dx\,dy.
    \end{aligned}
\end{equation}
By \eqref{T5-3}, \eqref{T5-8}, \eqref{T5-4}, and \eqref{T7-1}, we obtain the assertions of Theorem \ref{T7}.
\end{proof}

\begin{rem}
{\rm By setting $a=2$ and $\lambda\rightarrow 1^-$ in Theorem \ref{T7}, it coincides with the area theorem in 
 \cite[p. 90]{du}.}
\end{rem}


	
	\vskip.05in
	\noindent{\bf Funding.}
Z.-G. Wang was partially supported by the \textit{Key Project of Education Department of Hunan Province}, and
the \textit{Natural Science Foundation of Changsha} under Grant no. kq2502003
of the P. R. China. J.-L. Qiu was partially supported by the \textit{Graduate Research Innovation Project of Hunan Province} under Grant no. CX20240803 of the P. R. China.
A. Rasila was partially supported by the \textit{Natural Science Foundation of Guangdong Province} under Grant no. 2024A1515010467 of the P. R. China, and the \textit{Li Ka Shing Foundation GTIIT-STU Joint Research Grant} under Grant no. 2024LKSFG06. 

\vskip.05in
\noindent{\bf Acknowledgements.}
The authors thank Shengyu Li, Zhi-Hong Liu and Xiao-Yuan Wang for their stimulating
discussions in the preparation of this paper. They also thank the referees and the editor for their valuable
comments and suggestions, which were essential in improving the quality
of this paper.

	\vskip.05in
    
	\noindent{\bf Conflicts of interest.} The authors declare that they have no conflict of interest.
	
	\vskip .05in
\noindent{\bf Data availability statement.}  Data sharing is not applicable to this article as no datasets were generated or analysed during the current study.

    \vskip .05in


\begin{thebibliography}{99}
\bibitem{b}
J. Becker, Löwnersche differentialgleichung und quasikonform fortsetzbare schlichte funktionen (German), \textit{J. Reine Angew. Math.}   \textbf{255} (1972), 23--43.



\vskip.05in
 \bibitem{cp}
       S. Chen and S. Ponnusamy, Koebe type theorems and pre-Schwarzian of harmonic $K$-quasiconformal mappings, and their applications, \textit{Acta Math. Sin.} (\textit{Engl. Ser.}) \textbf{38} (2022), 1965--1980. 
    
\vskip.05in
\bibitem{chm}
       M. Chuaqui, R. Hernández, and M. J. Martín, Affine and linear invariant families of harmonic mappings, \textit{Math. Ann. }\textbf{367} (2017), 1099--1122. 
       
 \vskip.05in
\bibitem{cs}
       J. Clunie and T. Sheil-Small, Harmonic univalent functions, \textit{Ann. Acad. Sci. Fenn. Ser. A I Math.} \textbf{9} (1984), 3--25. 

\vskip.05in
\bibitem{dhr}
S. Das, J. Huang, and A. Rasila, Hardy spaces of harmonic quasiconformal mappings and Baernstein's theorem, \textit{Bull. Sci. Math.} (2026), doi: https://doi.org/10.1016/j.bulsci.2025.103789.
2025. 

\vskip.05in
\bibitem{dy}
E. Deniz and E. Yalçin, Geometric properties of partial sums of generalized Koebe function, \textit{Turkish J. Math.} \textbf{44} (2020), 194--206. 

\vskip.05in
\bibitem{du}
P. Duren, \textit{Harmonic mappings in the plane}, Cambridge University
Press, Cambridge, 2004.

\vskip.05in
 \bibitem{fhm}
\'A. Ferrada-Salas, R. Hern\'andez,  and M. J. Mart\'in, On convex combinations of convex harmonic mappings, \textit{Bull. Aust. Math. Soc.} \textbf{96} (2017), 256--262.

\vskip.05in
 \bibitem{fm}
       \'A. Ferrada-Salas and M. J. Martín, Generalized harmonic Koebe functions, \textit{J. Math. Anal. Appl.} \textbf{435} (2016), 860--873. 

 \vskip.05in
\bibitem{ga} 
N. Ghosh and V. Allu, On some subclasses of harmonic mappings, \textit{Bull. Aust. Math. Soc.} \textbf{101} (2020), 130--140.

\vskip.05in
 \bibitem{g2}
      S. Y. Graf, On the Schwarzian norm of harmonic mappings, \textit{Probl. Anal. Issues Anal.} \textbf{5}(23) (2016), 20--32.  
	
\vskip.05in
 \bibitem{hm1}
    R. Hern\'andez and M. J. Mart\'in, Criteria for univalence and quasiconformal extension of harmonic mappings in terms of the Schwarzian derivative, \textit{Arch. Math.} (\textit{Basel}) \textbf{104} (2015), 53--59.
    
 \vskip.05in
 \bibitem{hm2}
R. Hern\'andez and M. J. Mart\'in, Pre-Schwarzian and Schwarzian derivatives of harmonic mappings, \textit{J. Geom. Anal.} \textbf{25} (2015), 64--91.

\vskip.05in
 \bibitem{hrz}
J. Huang, A. Rasila, and J.-F. Zhu, Lipschitz property of harmonic mappings with respect to pseudo-hyperbolic metric, \textit{Anal. Math.} \textbf{48} (2022), 1069--1080.

\vskip.05in
\bibitem{kpv}
D. Kalaj, S. Ponnusamy, and M. Vuorinen, Radius of close-to-convexity and fully starlikeness of harmonic mappings, \textit{Complex Var. Elliptic Equ.} \textbf{59} (2014), 539--552.

 \vskip.05in	
\bibitem{kt}
R. Kargar and L. Trojnar-Spelina, Starlike functions associated with the generalized Koebe function, \textit{Anal. Math. Phys.} \textbf{11} (2021), Paper No. 146, 26 pp.

\vskip.05in
\bibitem{l}
H. Lewy, On the non-vanishing of the Jacobian in certain one-to-one mappings, \textit{Bull. Amer. Math. Soc.} \textbf{42} (1936), 689--692.

\vskip.05in
\bibitem{lp}
P. Li and S. Ponnusamy, On the coefficients estimate of $K$-quasiconformal harmonic mappings, arXiv:2504.08284, 2025.

\vskip.05in
\bibitem{lz}
 J. Liu and J.-F. Zhu, Riesz conjugate functions theorem for harmonic quasiconformal mappings, \textit{Adv. Math.} \textbf{434} (2023), Paper No. 109321, 27 pp.

\vskip.05in
 \bibitem{m}
M. J. Mart\'in, Sharp bounds for the growth and distortion of the analytic part of convex harmonic functions, arXiv:2505.04319, 2025.

\vskip.05in
 \bibitem{ma}
M. Mateljevi\'{c}, Quasiconformal and quasiregular harmonic analogues of Koebe's theorem and applications, \textit{Ann. Acad. Sci. Fenn. Math.} \textbf{32} (2007), 301--315. 

\vskip.05in
 \bibitem{mpq}
L. Meng,  S. Ponnusamy, and J. Qiao,  Properties of logharmonic mappings, \textit{J. Geom. Anal.} 
\textbf{34} (2024), Paper No. 240, 38 pp.

\vskip.05in
 \bibitem{nst}
 I. Naraniecka, J. Szynal, and A. Tatarczak, The generalized Koebe function, \textit{Tr. Petrozavodsk. Gos. Univ. Ser. Mat.} \textbf{17} (2010), 61--66.

 \vskip.05in
\bibitem{p}
C. Pommerenke, Linear-invariante Familien analytischer Funktionen I, \textit{Math. Ann.} \textbf{155} (1964), 108--154.

\vskip.05in
 \bibitem{pr}
S. Ponnusamy and A. Rasila, \textit{Planar harmonic and quasiregular mappings}, Topics in Modern
Function Theory: Chapter in CMFT, RMS-Lecture Notes Series No. \textbf{19} (2013), 267--333.

 \vskip.05in
 \bibitem{sh}
T. Sheil-Small, Constants for planar harmonic mappings, \textit{J. London Math. Soc.} \textbf{42} (1990),
237--248.

\vskip.05in
\bibitem{wang} 
 Z.-G. Wang, Z.-H. Liu, and Y.-C. Li, On the linear combinations of harmonic univalent mappings, \textit{J. Math. Anal. Appl.} \textbf{400} (2013), 452--459.

\vskip.05in
\bibitem{wang1} 
 Z.-G. Wang, L. Shi, and Y.-P. Jiang, On harmonic $K$-quasiconformal mappings associated with asymmetric vertical strips, \textit{Acta Math. Sin. (Engl. Ser.)} \textbf{31} (2015),
1970--1976.

\vskip.05in
    \bibitem{wwrq}
       Z.-G. Wang, X.-Y. Wang, A. Rasila, and J.-L. Qiu, On a problem of
Pavlovi\'c involving harmonic quasiconformal mappings, arXiv:2405.19852, 2024. 
 
\end{thebibliography}
\end{document}